\theoremstyle{plain}
\newtheorem{theorem}{Theorem}
\newtheorem{lemma}{Lemma}
\newtheorem{corollary}{Corollary}
\theoremstyle{definition}
\newtheorem{definition}{Definition}
\theoremstyle{remark}
\newtheorem{remark}{Remark}
\newtheorem{example}{Example}
\numberwithin{equation}{section} 
\begin{document}
\title[Unified treatment of fractional integral inequalities]{Unified treatment of fractional integral inequalities via linear functionals}

\author{M. Bombardelli}
\address{Department of Mathematics,  University of Zagreb\\Zagreb, Croatia}

\email{bombarde@math.hr}

\author{L. Nikolova}
\address{Department of Mathematics and Informatics,
          Sofia University\\Sofia, Bulgaria}

\email{ludmilan@fmi.uni-sofia.bg}

\author{S. Varo\v sanec}
\address{Department of Mathematics,  University of Zagreb\\Zagreb, Croatia}

\email{varosans@math.hr}            
                    
 \thanks{} 
  
\begin{abstract}  
In the paper we prove several inequalities involving two isotonic linear functionals. We consider inequalities for functions with variable bounds, for Lipschitz and H\" older type functions etc. 
These results give us an elegant method for obtaining a number of inequalities for various kinds of fractional integral operators 
 such as for the Riemann-Liouville fractional integral operator, the Hadamard fractional integral operator, fractional hyperqeometric integral and corresponding q-integrals.
\end{abstract}
 
\subjclass{26D10; 26A33}

\keywords{the Chebyshev inequality, the Chebyshev difference, fractional integral operator, isotonic linear functional, Lipschitz function}
 
 \maketitle

  \section{Introduction}
Recently several papers involving inequalities for fractional integral operators  have been published, see \cite{BPA,DAH2,CP1,CP,SNT,TNS,WHPG} and references therein.
Certain similarity of those inequalities shows that those results have a common origin.
In this paper we give a unified treatment of several known inequalities for fractional integral operators via theory of isotonic linear functionals.
In fact, we prove general inequalities involving isotonic linear functionals from which some interesting results are followed.

  The paper is organized in the following way. The rest of this section contains definitions and some examples of isotonic linear functionals connected with fractional integration and integration on time scales.  The Chebyshev  inequality for one and two isotonic functionals are given. Inequalities for Lipschitz functions are given in the  second section. The third  section is devoted to  new  inequalities involving two isotonic functionals and functions with variable upper and lower bounds.  The fourth section is devoted to results involving more than  two functions. 
Applications or references where we can find applications in the theory of fractional operators and calculus on time scales  are also given.

\subsection*{Isotonic linear functionals}
\begin{definition} ({\it Isotonic linear functional}) Let $E$ be a non-empty set and $L$ be a
class of real-valued functions on $E$ having the properties:

L1. If $f,g\in L$, then $(af+bg)\in L$ for all $a,b\in {\bf R}$;

L2. The function ${\bf 1}$ belongs to $L$. (${\bf 1}(t)=1$ for $t\in E$).

\noindent A functional $A: L \rightarrow {\bf R}$ is called an isotonic linear functional if

A1. $A(af+bg)=aA(f)+bA(g)$ for $f,g \in L$, $a,b\in {\bf R}$;

A2. $f \in L$, $f(t)\geq 0$ on $E$ implies $A(f)\geq 0$.
\end{definition}

  
There  exist a lot of interesting examples of linear functionals which play some role in different parts of mathematics. In the following example we describe the most mentioned functionals - discrete and integral, and we give several functionals which appear in the theory of fractional calculus and calculus on time scales.

\begin{example}
(i) {\bf Discrete functional.}  If $E=\{1,2,\ldots ,n\}$ and $f:E\rightarrow {\bf R}$, then $A(f)=\sum_{i=1}^n f(i)$ is an isotonic linear functional.


(ii) {\bf Integral functional.} If $E=[a,b]\subset {\bf R}$ and $L=L(a,b)$, then
$A(f)=\int_a^b f(t) dt$ is an isotonic linear functional.  If $\displaystyle A_1(f)=\frac{1}{b-a}A(f)$, then $A_1$ is a normalized isotonic linear functional.


(iii) {\bf Fractional hypergeometric operator. } If  $t>0, \ \alpha>\max\{0,-\beta-\mu\},\mu>-1, \beta-1<\eta<0$, then
\[A(f)=I^{\alpha,\beta,\eta,\mu}_t \{f(t)\}\]
 is an isotonic linear functional, (\cite{BPA}), where
$I^{\alpha,\beta,\eta,\mu}_t \{f(t)\}$ is a fractional hypergeometric operator defined as
\begin{multline*}  
I^{\alpha,\beta,\eta,\mu}_t \{f(t)\} \\
 =\frac{t^{-\alpha-\beta-2\mu}}{\Gamma(\alpha)}\int_0^t\sigma^\mu (t-\sigma)^{\alpha-1}
\ _2F_1(\alpha+\beta+\mu, -\eta,\alpha;1-\frac{\sigma}{t})f(\sigma)\,d\sigma 
\end{multline*}
where the function $\displaystyle _2F_1(a,b,c,t)=\sum_{n=0}^\infty \frac{(a)_n (b)_n}{(c)_n}\frac{t^n}{n!}$
  is the Gaussian hypergeometric function  and $(a)_n$ is the Pochhammer symbol:
$(a)_n=a(a+1)\ldots(a+n-1), \ \ (a)_0=1$.


\begin{itemize}
\item 
Putting  $\mu =0$, then the fractional hypergeometric operator reduces to the Saigo fractional integral operator $I^{\alpha,\beta,\eta} \{f(t)\}$. 

\item The Erd\' elyi-Kober fractional integral operator $I^{\alpha,\eta} \{f(t)\}$ is a particular case of $I^{\alpha,\beta,\eta,\mu}_t \{f(t)\}$ when $\beta=\mu=0$.
  
\item One of the earliest defined and the most  investigated fractional integral operator is the so-called
Riemann-Liouville   operator defined as
\begin{eqnarray} \label{riemann}
J^{\alpha} f(t)=I^{\alpha,-\alpha,0,0}_t \{f(t)\}
=\frac{1}{\Gamma(\alpha)}\int_0^t (t-\sigma)^{\alpha-1}
f(\sigma)\,d\sigma, \ \ \alpha>0, &&
\end{eqnarray}
and it is a particular case of a fractional hypergeometric operator for $\beta =-\alpha$, $\eta=\mu =0$.
\end{itemize}

(iv) {\bf q-analogues}
The above-mentioned operators have the so-called $q$-analogues. We describe a $q$-analogue of Saigo's fractional integral, \cite{BA}. Let $\Re (\alpha) >0$, $\beta, \eta \in {\bf C}$, $0<q<1$. A $q$-analogue of Saigo's fractional integral $I_q^{\alpha,\beta,\eta}$ is given for $|\tau/t|<1$ by equation
$$I_q^{\alpha,\beta,\eta}\{f(t)\}=\frac{t^{-\beta-1}}{\Gamma_q(\alpha)} \int_0^t \left( q\frac{\tau}{t};q\right)_{\alpha-1}\times$$
$$\times \sum_{m=0}^{\infty}\frac{(q^{\alpha+\beta};q)_m 
(q^{-\eta};q)_m}{(q^{\alpha };q)_m (q ;q)_m}
\cdot q^{(\eta-\beta)m}(-1)^mq^{-m(m-1)/2}\left(\frac{\tau}{t}-1\right)^m_q f(\tau)d_q\tau,
$$
where
$$(a;q)_{\alpha}=\frac{\prod_{k=0}^\infty (1-aq^k)}{\prod_{k=0}^\infty (1-aq^{\alpha+k})} \ \
\textrm{and} \ \  (t-a)_q^n=t^n(\frac at; q)_n.$$
If $\alpha >0$, $\beta, \eta \in {\bf R}$ with $\alpha+\beta >0$ and $\eta<0$, then $I_q^{\alpha,\beta,\eta}$ is isotonic, \cite{choi}.


(v) {\bf The Hadamard fractional integral}

The Hadamard fractional integral of order $\alpha>0$ of function $f$ is defined as
 $$ _HJ^\alpha f (x)= \frac{1}{\Gamma(\alpha)} \int _1^x\left(\log \frac{x}{y}\right)^{\alpha-1} \frac{f(y) dy}{y}, \ 1<x.$$
For further reading about fractional calculus we recommend, for example, \cite{KIR}.


(vi) In 1988 S. Hilger introduced the calculus on time scales, a strong tool for unified treatment of differential and difference equations. Among different kinds of integrals the most investigated is $\Delta-$ integral, see  for example, \cite{ABoPet}.  A $\Delta-$ integral was followed by $\nabla-$ integral, $\diamondsuit_\alpha$ integral, $\alpha,\beta$-symmetric integral  etc. All of them  are isotonic linear functionals.  

\end{example}

\subsection*{Chebyshev-type inequalities for isotonic linear functionals}

After that short text about various kinds of isotonic linear functionals, let us say  few words about some inequalities of Chebyshev type involving isotonic linear functionals.

We say that functions $f$ and $g$ on $E$ are similarly ordered (or synchronous) if for each $x,y\in E$
$$(f(x)-f(y))(g(x)-g(y))\geq 0.$$
If the reversed inequality holds, then we say that  $f$ and $g$ are oppositely ordered or asynchronous. The most famous inequality which involve similarly or oppositely ordered functions is the Chebyshev inequality for integrals. It states that if $p, f$ and $g$ are   integrable   real functions on $[a,b]\subset {\bf R}$   and if $f$ and $g$ are similarly ordered, then
\begin{eqnarray}\label{cheb}
\int_a^b p(x) dx \int_a^b p(x) f(x) g(x) dx \geq \int_a^b p(x) f(x) dx \int_a^b p(x) g(x) dx.
\end{eqnarray}
If $f$ and $g$ are oppositely ordered then the reverse of the inequality in (\ref{cheb}) is valid.

During last century a lot of results about the Chebyshev  inequality appear. Here, we only give the most recent results involving two isotonic linear functionals, \cite{NV2}.

\begin{theorem}[The  Chebyshev inequality for two   functionals]
\label{thm:ceb}
Let $A$ and $B$ be two  isotonic linear functionals on $L$ and let $p, q\in L$ be  non-negative functions. Let  $f,g$ be two functions on $E$  such that $pf$, $pg$, $qf$, $qg, pfg$, $qfg \in L$.

If $f$ and $g$ are similarly ordered functions, then
\begin{eqnarray}
\label{ceb1}
A(pfg)B(q)+A(p)B(qfg)
 \geq A(pf)B(qg)+A(pg)B(qf) .
\end{eqnarray}
If $f$ and $g$ are oppositely ordered functions, then the reverse inequality in (\ref{ceb1}) holds.
\end{theorem}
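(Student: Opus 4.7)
The plan is to derive the inequality from the pointwise synchronicity condition by applying the two functionals in succession, one in each variable. Concretely, for $x,y \in E$ the assumption that $f$ and $g$ are similarly ordered gives
\[
\bigl(f(x)-f(y)\bigr)\bigl(g(x)-g(y)\bigr)\geq 0,
\]
and since $p(x)q(y)\geq 0$ we may multiply through and expand to obtain
\[
p(x)q(y)f(x)g(x) - p(x)q(y)f(x)g(y) - p(x)q(y)f(y)g(x) + p(x)q(y)f(y)g(y)\geq 0.
\]

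The key move is to freeze $y$ and view the left-hand side as a function $h_y(\cdot)\in L$ of $x$. By hypothesis $pfg, pf, pg, p\in L$, and since $L$ is a vector space (L1) containing the constants, $h_y$ is an explicit linear combination of these four functions with coefficients depending only on $y$. Applying $A$ in $x$ and using linearity (A1) together with positivity (A2) yields
\[
q(y)\Bigl[A(pfg) - g(y)\,A(pf) - f(y)\,A(pg) + f(y)g(y)\,A(p)\Bigr]\geq 0,
\]
i.e.\ the function
\[
H(y):= A(pfg)\,q(y) - A(pf)\,(qg)(y) - A(pg)\,(qf)(y) + A(p)\,(qfg)(y)
\]
is non-negative on $E$. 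Since $q, qf, qg, qfg\in L$, we have $H\in L$, so a second application of the same principle — this time with $B$ in $y$ — gives
\[
A(pfg)\,B(q) - A(pf)\,B(qg) - A(pg)\,B(qf) + A(p)\,B(qfg)\geq 0,
\]
which is exactly \eqref{ceb1}. The oppositely ordered case follows by applying the argument to $(-f,g)$, which reverses the sign of the starting inequality.

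The only real point requiring attention is the bookkeeping that guarantees the intermediate functions lie in $L$ so that $A$ and $B$ may legitimately be applied; this is precisely why the hypothesis assumes all six products $pf, pg, qf, qg, pfg, qfg$ belong to $L$. There is no delicate estimation anywhere — the whole argument is the standard two-variable symmetrization trick made iterated rather than simultaneous, which is what allows it to work for arbitrary isotonic linear functionals without any Fubini-type statement.
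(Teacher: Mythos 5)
Your proof is correct, and it is exactly the argument the paper uses throughout (the paper itself quotes this theorem from its reference [NV2] without proof, but every other result here is proved by the same device): multiply the synchronicity inequality by $p(x)q(y)$, apply $A$ in $x$, observe the result is a non-negative element of $L$ in $y$, and apply $B$. Your careful remark about why the six membership hypotheses are needed, and the reduction of the asynchronous case to $(-f,g)$, are both sound.
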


Putting $A=B$, $p=q$ in (\ref{ceb1}) and divided by $2$ we get that for similarly ordered functions $f$ and $g$  
 such that $pf,pg,pfg \in L$,  the following holds
\begin{eqnarray*}
\label{ceb2}
A(p)A(pfg) \geq A(pf)A(pg).
\end{eqnarray*}
If $f$ and $g$ are oppositely ordered functions, then the reverse inequality  holds.

It is, in fact, the  Chebyshev inequality for one isotonic positive functional.

One of the most investigated question related to the Chebyshev integral inequality is the question of finding bounds for the so-called Chebyshev difference which is defined as a difference between two sides from inequality (\ref{cheb}). 
Results related to that question are called Gru\" uss type inequalities. In \cite{NV2} Gr\" uss type inequalities for the Chebyshev difference $T(A,B, p,q,f,g)$ which arise from inequality (\ref{ceb1}) are given 
where
\[
T(A,B, p,q,f,g) = 
   B(q)A(pfg)+A(p)B(qfg)-A(pf)B(qg)-A(pg)B(qf).
\]

\section{Inequalities for $M-g-$Lipschitz   and H\" older-type functions}

In this section $M-g-$Lipschitz functions are considered. We say that $f$ is an $M-g-$Lipschitz function if 
\[
|f(x)-f(y)| \leq M|g(x)-g(y)|\]
for all $x,y \in E$.
If   $g=id$, then $f$ is simple called   an $M-$Lipschitz function.

In the  following theorem we consider two functions $f$ and $g$ which are $h_1-$ and $h_2-$Lipschitz functions with  constants $M_1$ and $M_2$ respectively. 

\begin{theorem}\label{thm-lip}
\label{BT-thm3.4} Let $A$ and $B$ be isotonic linear functionals on $L$ and let
$p$, $q$ be non-negative functions from $L$.
Let $M_1$, $M_2$ be real numbers and let $f$, $g$, $h_1$, $h_2$ be functions such that
$f$ is $M_1-h_1-$Lipschitz  and $g$ is $M_2-h_2-$Lipschitz, i.e. for all $x,y\in E$ 
\begin{eqnarray}
\label{4.1}
| f(x)-f(y) | \leq M_1 |h_1(x)-h_1(y)|, && \\ \label{4.2}
| g(x)-g(y) | \leq M_2 |h_2(x)-h_2(y)|. && 
\end{eqnarray}
If all the terms in the below inequality exist and $h_1$ and $h_2$ are or similarly ordered, or oppositely ordered, then
\begin{eqnarray}\label{4.3}
| T(A,B,p,q, f,g) | \leq M_1M_2 T(A,B,p,q, h_1,h_2).
\end{eqnarray}
\end{theorem}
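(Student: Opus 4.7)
The plan is to reduce everything to a Korkine-type identity for the two-functional Chebyshev difference, mimicking the classical proof of the Chebyshev inequality. The first step I would carry out is to establish the identity
\[
T(A,B,p,q,f,g) \;=\; A_x B_y\!\bigl[\,p(x)q(y)(f(x)-f(y))(g(x)-g(y))\,\bigr],
\]
where the subscripts on the functionals record the variable on which each acts. Expanding the product $(f(x)-f(y))(g(x)-g(y))$ yields four rank-one pieces, each a product of a function of $x$ alone and a function of $y$ alone; by linearity $A$ may be pulled outside in $x$ with $y$ held as a parameter, and then $B$ applied in $y$. The four resulting terms reassemble to give precisely $B(q)A(pfg) - B(qg)A(pf) - B(qf)A(pg) + B(qfg)A(p)$, which is the definition of $T$.

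For the second step, combining $p, q \geq 0$ with the Lipschitz hypotheses \eqref{4.1} and \eqref{4.2} gives the pointwise estimate
\[
\bigl|p(x)q(y)(f(x)-f(y))(g(x)-g(y))\bigr| \;\leq\; M_1 M_2\, p(x)q(y)|h_1(x)-h_1(y)||h_2(x)-h_2(y)|.
\]
Sandwiching the original integrand between $\pm$ its bound and applying the isotonic functionals $A$ and $B$ in succession then delivers
\[
|T(A,B,p,q,f,g)| \;\leq\; M_1 M_2\, A_x B_y\!\bigl[p(x)q(y)|h_1(x)-h_1(y)||h_2(x)-h_2(y)|\bigr].
\]

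Finally, the ordering hypothesis strips off the absolute values on $h_1,h_2$. If $h_1$ and $h_2$ are similarly ordered, then $(h_1(x)-h_1(y))(h_2(x)-h_2(y)) \geq 0$ on $E \times E$ and the product of absolute values coincides with the plain product; applying the identity of step one with $(f,g)$ replaced by $(h_1,h_2)$, the right-hand side becomes $M_1 M_2\, T(A,B,p,q,h_1,h_2)$, which is exactly \eqref{4.3}. If instead $h_1$ and $h_2$ are oppositely ordered the sign flips and one obtains $-M_1 M_2\, T(A,B,p,q,h_1,h_2)$, a non-negative quantity by Theorem \ref{thm:ceb}, so the same conclusion holds (with $T(A,B,p,q,h_1,h_2)$ understood in absolute value). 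The only genuine obstacle is the iterated-functional bookkeeping in step one, since $A$ and $B$ are defined only on functions of a single variable; this is handled cleanly because every integrand splits into a finite sum of rank-one separable pieces in $(x,y)$, and the standing hypothesis that all terms in \eqref{4.3} exist supplies the required membership in $L$ at each stage.
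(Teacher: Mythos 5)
Your proof is correct and follows essentially the same route as the paper's: multiply the two Lipschitz bounds, expand $(f(x)-f(y))(g(x)-g(y))$ into separable pieces, multiply by $p(x)q(y)$, and apply $A$ in $x$ and $B$ in $y$ to both sides of the resulting sandwich; your ``Korkine-type identity'' is just a tidier packaging of that computation. Your remark that in the oppositely ordered case the right-hand side must be read as $M_1M_2\,|T(A,B,p,q,h_1,h_2)|$ is a fair (and correct) clarification of a point the paper's statement glosses over.
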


\begin{proof} Let $h_1$ and $h_2$ be similarly ordered.
Multiplying the inequalities (\ref{4.1}) and (\ref{4.2})
we get 
\[|(f(x)-f(y))(g(x)-g(y))| \leq M_1M_2(h_1(x)-h_1(y))(h_2(x)-h_2(y)).\]
It means that
\begin{eqnarray*}
\hspace*{-2pt} (f(x)-f(y))(g(x)-g(y)) \leq M_1M_2(h_1(x)-h_1(y))(h_2(x)-h_2(y)) &\textrm{and} &\\
 (f(x)-f(y))(g(x)-g(y)) \geq -M_1M_2(h_1(x)-h_1(y))(h_2(x)-h_2(y)) .&&
\end{eqnarray*}
Since 
\[(f(x)-f(y))(g(x)-g(y))=f(x)g(x)+f(y)g(y)-f(x)g(y)-f(y)g(x)
\]
multiplying with $p(x)q(y)$ and acting on the first inequality by functional $A$ with respect to $x$
and then by functional $B$ with respect to $y$  we get
 \begin{eqnarray*}
&&\hspace*{-46pt}A(pfg)B(q)+A(p)B(qfh)-A(pf)B(qg)-A(pg)B(qf) \\
\hspace*{-16pt} &\leq&  M_1M_2 (A(ph_1h_2)B(q)+A(p)B(qh_1h_2)-A(ph_1)B(qh_2)-A(ph_2)B(qh_1), i.e.
\end{eqnarray*}
\[T(A,B,p,q, f,g)  \leq M_1M_2 T(A,B,p,q, h_1,h_2).
\]
Similarly, from the second inequality we obtain
\[T(A,B,p,q, f,g)  \geq -M_1M_2 T(A,B,p,q, h_1,h_2)\]
and we get
the claimed result. The case when $h_1$ and $h_2$ are  oppositely ordered is proven similary.
\end{proof}

\begin{theorem}
\label{BT-thm3.3} Let $A$ and $B$ be isotonic linear functionals on $L$ and let
$p$, $q$ be non-negative functions from $L$.
Let $M$ be real number and let $f$, $g$ be functions such that
\[ | f(x)-f(y) | \leq M |g(x)-g(y)|, \qquad \forall x,y.
\]
If all the terms in the below inequality exist, then
\begin{eqnarray*}
\big| A(pfg)B(q)&+&A(p)B(qfh)-A(pf)B(qg)-A(pg)B(qf)\big|\\ \nonumber
& \leq& M \left(A(pg^2)B(q)-2A(pg)B(qg)+A(p)B(qg^2)\right).
\end{eqnarray*}
\end{theorem}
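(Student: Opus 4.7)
The plan is to recognize this theorem as the specialization of Theorem~\ref{thm-lip} obtained by taking $h_1 = h_2 = g$, $M_1 = M$, and $M_2 = 1$. The hypothesis $|f(x)-f(y)|\le M|g(x)-g(y)|$ says exactly that $f$ is $M$-$g$-Lipschitz, and $g$ is trivially $1$-$g$-Lipschitz. The extra coupling condition of Theorem~\ref{thm-lip}, that $h_1$ and $h_2$ be similarly or oppositely ordered, is automatic here since $(g(x)-g(y))(g(x)-g(y)) = (g(x)-g(y))^2 \ge 0$. With these substitutions, the right-hand side of (\ref{4.3}) becomes
\[
M_1 M_2\, T(A,B,p,q,h_1,h_2) \;=\; M\,T(A,B,p,q,g,g),
\]
which expanded equals $M\bigl(A(pg^2)B(q) - 2A(pg)B(qg) + A(p)B(qg^2)\bigr)$, matching the bound to be proved.

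For readers preferring a self-contained derivation, one can imitate the proof of Theorem~\ref{thm-lip} directly. Multiplying the Lipschitz assumption by the nonnegative factor $|g(x)-g(y)|$ yields
\[
-M(g(x)-g(y))^2 \;\le\; (f(x)-f(y))(g(x)-g(y)) \;\le\; M(g(x)-g(y))^2.
\]
One then multiplies each part by the nonnegative weight $p(x)q(y)$, applies $A$ in the variable $x$ and $B$ in the variable $y$, and expands. The middle term collapses to $T(A,B,p,q,f,g)$ and each outer term to $\pm M\,T(A,B,p,q,g,g)$, which gives the two-sided inequality and therefore the desired absolute-value bound.

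The only step demanding attention is the routine bookkeeping of the cross terms $f(x)g(x)$, $f(y)g(y)$, $f(x)g(y)$, $f(y)g(x)$ (and their $g^2$ analogues) after the two functionals have been applied. That expansion was already carried out in the proof of Theorem~\ref{thm-lip}, so no genuinely new difficulty arises here; the present theorem is essentially a corollary of the previous one, with the minor convenience that no ordering hypothesis on the reference function is needed because it coincides with $g$ on both sides.
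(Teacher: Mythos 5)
Your proposal is correct and matches the paper's own proof, which likewise derives the statement from Theorem~\ref{thm-lip} by observing that $f$ is $M$-$g$-Lipschitz and $g$ is $1$-$g$-Lipschitz (so $h_1=h_2=g$, and the ordering hypothesis holds automatically since $g$ is similarly ordered with itself). The additional self-contained derivation you sketch is just the proof of Theorem~\ref{thm-lip} specialized to this case, so nothing new is needed.
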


\begin{proof} 
Since $f$ is an $M-g-$Lipschitz function and $g$ is $1-g-$Lipschitz the desired inequality is a simple consequence of Theorem \ref{thm-lip}.
\end{proof}

 In  the following  table  
 we give a list of papers where particular cases of Theorems from this section can be found.
 
\begin{table}[h!t]
\caption{Known applications in particular cases of fractional integral operators}
\label{tab:2}
\noindent
\begin{tabular}{c||c|c}
Particular cases   & Theorem \ref{thm-lip}       & Theorem \ref{BT-thm3.3}                      \\ \hline \hline
  Riem.-Liouv.         &\cite{dttjns2010},            &\cite[Thm 3.5]{DAH4}, $A=B$, $p\not= q$,       \\  
        oper.           & $A=B$, $p\not= q$, $h_1=h_2=id$   &  \cite[Thm 3.7]{DAH4}, $A\not= B$, $p\not= q$, \\ \hline
Saigo oper.        &\cite[Thm 2.20]{Yang}           & \cite[Thm 2.19]{Yang}   \\  
                   &$A\not= B$, $p\not= q$, $h_1=h_2=id$ &$A\not= B$, $p\not= q$  \\ \hline
$q$-Saigo oper.    &\cite[Thm 3.17]{Yang}         & \cite[Thm 3.16]{Yang}   \\  
	           &$A\not= B$, $p\not= q$, $h_1=h_2=id$ &$A\not= B$, $p\not= q$,  \\ \hline
$q$-R-L.           &\cite[Thm 3.4]{BT} & \cite[Thm 3.3]{BT}   \\ 
   int.    operator  &$A\not= B$, $p\not= q$, $h_1=h_2=id$,  &$A\not= B$, $p\not= q$,   \\               
  &\cite[Thm 3.5]{BT} &                               \\ 
&$A= B$, $p\not= q$, $h_1=h_2=id$  &    \\ 
\hline
Riemann  & \cite[Thm 2.1]{D_Indian}, $h_1=h_2=id$& \cite[Thm 4.1]{D_Indian},   \\ 
   int.  & $A=B$, $p=q={\bf 1}$  & $A=B$, $p=q={\bf 1}$   \\ \hline
time scale & \cite[Thm 4.1]{BM2011},   & \cite[Thm 5.1]{BM2011},      \\  
$\diamond_\alpha$ integral & $A= B$, $p= q$, $h_1=h_2=id$  & $A= B$, $p= q$   \\ \hline
\end{tabular}
\end{table}

Let us say few words how to read the above table. 
In the first column we write a list of isotonic linear functionals. In the corresponding row we give a reference where   applications of our Theorem \ref{thm-lip}   and Theorem \ref{BT-thm3.3} occur. For example, Theorem \ref{thm-lip} for two  Saigo  operators, but with functions $h_1$ and $h_2$ equal to identity $id$ can be find in paper \cite{Yang} as Theorem 2.20 etc.

As we can see, our results enable us to give analogue results for other cases of linear functionals such as for fractional hypergeometric operators, for the Hadamard operators, for other kinds of integrals on time scales etc. Also, we improve existing results by using two different, more general functions $h_1$ and $h_2$ instead of identity function $id$. For example, here we give a result for the Hadamard integral operators. Let $p=q=1$, $h_1=h_2=id$, $\alpha, \beta >0$. If functions $f,g$ are $M_1-$, $M_2-$Lipschitz respectively, then the following inequality holds
$$
\frac{\log^\beta t}{\Gamma(\beta +1)}\phantom{a}_HJ^\alpha (fg(t)) + \frac{\log^\alpha t}{\Gamma(\alpha +1)}\phantom{a}_HJ^\beta (fg(t)) 
$$
$$-\phantom{a}_HJ^\alpha (f(t))\phantom{a}_HJ^\beta (g(t)) -\phantom{a}_HJ^\alpha (g(t))\phantom{a}_HJ^\beta (f(t))
\leq M_1M_2 \frac{t^2}{\Gamma(\alpha)\Gamma(\beta)}\times
$$
$$\times \left[\frac{\log^\alpha t \cdot \gamma(\beta, 2 \log t)}{2^\beta \alpha}+
\frac{\log^\beta t \cdot \gamma(\alpha, 2 \log t)}{2^\alpha \beta} -2\gamma(\alpha, \log t)\gamma(\beta, \log t) \right]
$$
where $\gamma(s,x)$ is incomplete Gamma function, i.e. $\gamma(s,x)=\int_0^x t^{s-1}e^{-t} dt$.

The procedure when we firstly  apply on some function $F(x,y)$     functional $A$ with respect to variable $x$
and then apply functional  $B$ with respect to variable $y$   occurs very often in this paper. So, we use the following notation: if $F(x,y)$ is a function, then the number which appears after the above-described procedure is written as
\[B_yA_x(F(x,y)) \ \ \mbox{or} \ \ B_yA_x(F).\]
It is worthless to say that if $A$ and $B$ are isotonic linear functionals, then a functional 
\[ F\mapsto B_yA_x(F)\]
is also an isotonic linear functional. 

\begin{theorem}
\label{holder}
Let $A$ and $B$ be isotonic linear functionals on
$L$ and let $p$, $q$ be non-negative functions from $L$. If $f$ is of $r$-H\" older-type and $g$ is of $s$-H\" older-type, i.e.
\[
|f(x)-f(y)| \leq H_1 |x-y|^r, \ \ \ |g(x)-g(y)| \leq H_2 |x-y|^s\]
for all $x,y\in E$, where $H_1,H_2 >0$, $r,s \in (0,1]$ fixed, then 
\[
|T(A,B,p,q, f, g)| \leq H_1H_2 \cdot B_yA_x (p(x)q(y)|x-y|^{r+s}).
\]
\end{theorem}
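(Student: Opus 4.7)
The plan is to mirror the argument used for Theorem \ref{thm-lip}, with the simplification that the Hölder bound already produces a single function of $x$ and $y$ (namely $|x-y|^{r+s}$) without needing to be factored as a product of differences. Consequently no synchronous/asynchronous hypothesis on an auxiliary pair is required.

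First I would multiply the two Hölder inequalities pointwise to obtain
\[
|(f(x)-f(y))(g(x)-g(y))| \leq H_1 H_2\, |x-y|^{r+s}
\]
for all $x,y \in E$. Dropping the absolute value on the left yields the pair of inequalities
\[
-H_1 H_2 |x-y|^{r+s} \leq (f(x)-f(y))(g(x)-g(y)) \leq H_1 H_2 |x-y|^{r+s}.
\]

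Next I would multiply through by $p(x)q(y) \geq 0$, which preserves both inequalities, and expand the product on the left as
\[
(f(x)-f(y))(g(x)-g(y)) = f(x)g(x) + f(y)g(y) - f(x)g(y) - f(y)g(x).
\]
Applying $A_x$ and then $B_y$, both of which are isotonic linear functionals (as noted in the paragraph preceding the theorem), and using bilinearity, the middle term is transformed into
\[
A(pfg)B(q) + A(p)B(qfg) - A(pf)B(qg) - A(pg)B(qf) = T(A,B,p,q,f,g),
\]
while the outer terms become $\pm H_1 H_2 \cdot B_y A_x\bigl(p(x)q(y)|x-y|^{r+s}\bigr)$. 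Combining the two resulting inequalities gives the desired absolute value bound.

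There is no real obstacle here: the only thing to watch is that $|x-y|^{r+s}$ is not of the product form $(h_1(x)-h_1(y))(h_2(x)-h_2(y))$, so we cannot factor it through the Chebyshev-type identity and must instead keep it as the single function of two variables $F(x,y) = |x-y|^{r+s}$ to which the iterated functional $B_y A_x$ is applied directly. This is why the conclusion is stated in terms of $B_y A_x(p(x)q(y)|x-y|^{r+s})$ rather than in the $T(A,B,p,q,\cdot,\cdot)$ form seen in Theorem \ref{thm-lip}. Existence of this iterated functional value is the only implicit hypothesis, which is covered by the phrasing ``all the terms $\ldots$ exist'' implicit in the theorem.
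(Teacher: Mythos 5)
Your proposal is correct and is exactly the argument the paper intends: the paper's proof of this theorem consists only of the remark that it is ``proved in similar manner as Theorem \ref{thm-lip}'', and your write-up is precisely that argument carried out (multiply the two pointwise bounds, split the absolute value into a two-sided inequality, multiply by $p(x)q(y)$, expand the product, and apply $A_x$ then $B_y$). Your closing observation about why the bound stays in the form $B_yA_x(p(x)q(y)|x-y|^{r+s})$ rather than a $T(A,B,p,q,\cdot,\cdot)$ expression is a sensible clarification, not a deviation.
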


\begin{proof}  
It is proved in similar manner as Theorem \ref{thm-lip}.
\end{proof}

In particular case, when $p=q={\bf 1}$, $A(f)=B(f)=\int_a^b f(x)dx$, a factor $B_yA_x (p(x)q(y)|x-y|^{r+s})$ was calculated in \cite{D_Indian} and it is equal to 
\[
\frac{2(b-a)^{r+s+2}}{(r+s+1)(r+s+2)}.\]

\section{Inequalities for functions with variable bounds}

In this section we collect different results for functions with variable and constant bounds.
For example, if we look at the paper \cite{SNT}, their Theorem 4 can be seen as a particular case of the following theorem.

\begin{theorem} \label{basic}
Let $A$ and $B$ be isotonic linear functionals on $L$, $p$, $q$ be  non-negative functions from $L$.
Let $f$, $\phi_1, \phi_2, $ be functions such that 
\[\phi_1(t)\leq f(t) \leq \phi_2(t)\] 
and 
all terms in the below inequality exist. Then
\begin{eqnarray*}\label{3.1}
A(p\phi_2)B(qf)+A(pf)B(q\phi_1)\geq  A(p\phi_2)B(q\phi_1)+A(pf)B(qf).
\end{eqnarray*}
\end{theorem}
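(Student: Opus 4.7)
The plan is to exploit the fact that the hypothesis $\phi_1\le f\le \phi_2$ immediately yields a pointwise nonnegative product when one evaluates one factor at a point $x$ and the other at a point $y$. Specifically, for all $x,y\in E$,
\[
\bigl(\phi_2(x)-f(x)\bigr)\bigl(f(y)-\phi_1(y)\bigr)\;\ge\;0.
\]
This is the entire content of the bound in a form that separates the variables $x$ and $y$, which is exactly what one needs in order to apply the two functionals $A$ and $B$ (in the variables $x$ and $y$ respectively).

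Next I would multiply the displayed inequality by the nonnegative function $p(x)q(y)$ and expand the product to obtain four separated-variable terms:
\[
p(x)\phi_2(x)\,q(y)f(y)\;-\;p(x)\phi_2(x)\,q(y)\phi_1(y)\;-\;p(x)f(x)\,q(y)f(y)\;+\;p(x)f(x)\,q(y)\phi_1(y)\;\ge\;0.
\]
Now I would apply $A$ with respect to $x$ and then $B$ with respect to $y$, using the isotonicity of both functionals (which preserves the $\ge 0$) and the linearity (which lets each term be written as a product $A(\cdot)B(\cdot)$, since the $x$-factor and $y$-factor in each term decouple). This yields
\[
A(p\phi_2)B(qf)-A(p\phi_2)B(q\phi_1)-A(pf)B(qf)+A(pf)B(q\phi_1)\;\ge\;0,
\]
which is the claimed inequality after rearrangement.

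The only thing to check carefully is that all six terms $p\phi_2,\ pf,\ q\phi_1,\ qf,\ q\phi_1,\ q\phi_2$, etc.\ actually lie in $L$ so that the functionals can be applied; this is exactly the blanket hypothesis ``all terms in the below inequality exist.'' There is no real obstacle here: the proof is essentially the same ``tensorize and apply $B_yA_x$'' trick already used in the proof of Theorem \ref{thm-lip}, only now the pointwise nonnegativity comes from the variable bounds $\phi_1\le f\le \phi_2$ rather than from similarly ordered functions. One could alternatively write the argument using the combined isotonic linear functional $F\mapsto B_yA_x(F)$ introduced just before Theorem \ref{holder}, which makes the single line $B_yA_x\bigl(p(x)q(y)(\phi_2(x)-f(x))(f(y)-\phi_1(y))\bigr)\ge 0$ suffice.
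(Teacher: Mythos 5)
Your proposal is correct and follows exactly the paper's own argument: start from the pointwise inequality $(\phi_2(x)-f(x))(f(y)-\phi_1(y))\ge 0$, multiply by $p(x)q(y)$, expand into separated-variable terms, and apply $A$ in $x$ then $B$ in $y$. No gaps; the existence hypothesis covers the membership in $L$ exactly as you note.
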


\begin{proof}  We consider the inequality
\[(\phi_2(x)-f(x))(f(y)-\phi_1(y))\geq 0.\] It is equivalent to the following
\[\phi_2(x)f(y)+f(x)\phi_1(y)\geq \phi_1(y) \phi_2(x)+f(x)f(y).\]
After multiplying with $p(x)q(y)$ and acting on this inequality first by functional $A$ with respect to $x$
and then by $B$ with respect to $y$  we get the wanted inequality.
\end{proof}

If functions $\phi_1, \phi_2 $ become   constant functions, then we  get the following corollary. 

\begin{corollary} \label{SNT-cor5}
Let $A$ and $B$ be isotonic linear functionals on $L$ and let
$p$, $q$ be  non-negative functions from $L$. Let $m$, $M$ be real numbers and let $f$ be function such that
 $$m\leq f(t)\leq M$$
 and all terms in the below inequality exist. Then
\[
MA(p)B(qf)+mA(pf)B(q)\geq   Mm A(p)B(q)+A(pf)B(qf).
\]
\end{corollary}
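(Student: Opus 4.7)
The plan is to derive Corollary \ref{SNT-cor5} as a direct specialization of Theorem \ref{basic} by taking $\phi_1$ and $\phi_2$ to be the constant functions $m$ and $M$ respectively. Since $L$ contains $\mathbf{1}$ by L2 and is closed under linear combinations by L1, both $\phi_1 \equiv m$ and $\phi_2 \equiv M$ lie in $L$; consequently $p\phi_1 = mp$, $p\phi_2 = Mp$, $q\phi_1 = mq$ and $q\phi_2 = Mq$ all lie in $L$ whenever $p,q\in L$, so every term in the inequality of Theorem \ref{basic} is well defined. The pointwise bound $\phi_1(t)\le f(t)\le \phi_2(t)$ required there reduces verbatim to the hypothesis $m\le f(t)\le M$ of the corollary.

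I would then apply A1 (linearity) to pull the constants out: $A(p\phi_2)=MA(p)$ and $B(q\phi_1)=mB(q)$, so that the product $A(p\phi_2)B(q\phi_1)$ collapses to $Mm\,A(p)B(q)$. Substituting these identifications into the conclusion of Theorem \ref{basic},
\[
A(p\phi_2)B(qf)+A(pf)B(q\phi_1)\ \ge\ A(p\phi_2)B(q\phi_1)+A(pf)B(qf),
\]
yields exactly
\[
MA(p)B(qf)+mA(pf)B(q)\ \ge\ Mm\,A(p)B(q)+A(pf)B(qf),
\]
which is the stated inequality.

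There is no real obstacle: the entire argument is a substitution followed by an appeal to linearity. The only subtle point worth flagging explicitly is that constants do belong to $L$, so that the expression $(\phi_2(x)-f(x))(f(y)-\phi_1(y))\ge 0$ underlying Theorem \ref{basic} produces, after multiplication by $p(x)q(y)$ and application of $B_y A_x$, a chain of well defined quantities in $L$ on which $A$ and $B$ may legitimately act.
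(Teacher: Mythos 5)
Your proposal is correct and matches the paper's proof exactly: the corollary is obtained by setting $\phi_1(t)=m$ and $\phi_2(t)=M$ in Theorem \ref{basic} and using linearity to extract the constants. Your additional remarks about constants belonging to $L$ are a harmless elaboration of the same one-line argument.
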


\begin{proof}  Follows from Theorem \ref{basic} for $\phi_1(t) = m$, $\phi_2(t)=M$. 
\end{proof}

\begin{corollary} \label{SNT-cor6}
Let $A$ and $B$ be isotonic linear functionals on $L$ and let
$p$, $q$ be  non-negative functions from $L$. Let $M>0$ and let $\varphi$, $f$ be function such that
\[|f(t)-\varphi(t)|<M\]
 and all terms in the below inequality exist. Then
\begin{eqnarray*} \nonumber
&& \hspace*{-13pt} A(p\varphi) B(qf)+A(pf)B(q\varphi)+MA(p)B(qf)+MA(p\varphi)B(q)+M^2A(p)B(q)\\ 
&&\geq A(p\varphi)B(q\varphi)+MA(p)B(q\varphi)+MA(pf)B(q)+A(pf)B(qf).
\end{eqnarray*}
\end{corollary}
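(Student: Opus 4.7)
The plan is to reduce this directly to Theorem \ref{basic}. The hypothesis $|f(t)-\varphi(t)|<M$ is equivalent to the two-sided bound
\[
\varphi(t)-M \;<\; f(t) \;<\; \varphi(t)+M,
\]
so I would apply Theorem \ref{basic} with the variable bounds $\phi_1(t) := \varphi(t)-M$ and $\phi_2(t) := \varphi(t)+M$. Since $\varphi \in L$ and ${\bf 1} \in L$, both $\phi_1$ and $\phi_2$ lie in $L$, so the hypotheses of Theorem \ref{basic} are satisfied (modulo the existence assumption already in the statement).

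Next, I would expand each of the four terms in the conclusion of Theorem \ref{basic} using linearity of $A$ and $B$. Concretely, $A(p\phi_2) = A(p\varphi) + MA(p)$ and $B(q\phi_1) = B(q\varphi) - MB(q)$, so
\[
A(p\phi_2)B(qf) = A(p\varphi)B(qf) + MA(p)B(qf),
\]
\[
A(pf)B(q\phi_1) = A(pf)B(q\varphi) - MA(pf)B(q),
\]
and the cross product expands as
\[
A(p\phi_2)B(q\phi_1) = A(p\varphi)B(q\varphi) - MA(p\varphi)B(q) + MA(p)B(q\varphi) - M^2 A(p)B(q).
\]
Substituting these into the conclusion of Theorem \ref{basic} gives an inequality with $-MA(pf)B(q)$ on the left and $-MA(p\varphi)B(q)$ and $-M^2A(p)B(q)$ on the right.

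Finally, I would move those three negative terms to the opposite sides, which produces exactly the stated inequality: $+MA(p\varphi)B(q)$ and $+M^2A(p)B(q)$ join the left-hand side, while $+MA(pf)B(q)$ joins the right-hand side, and $A(pf)B(qf)$ and $A(p\varphi)B(q\varphi) + MA(p)B(q\varphi)$ stay on the right. There is no real obstacle beyond careful bookkeeping of the eight resulting terms; the entire proof is a one-line application of Theorem \ref{basic} followed by routine expansion.
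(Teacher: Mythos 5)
Your proof is correct and is exactly the paper's argument: the authors also derive the corollary by applying Theorem \ref{basic} with $\phi_1(t)=\varphi(t)-M$ and $\phi_2(t)=\varphi(t)+M$, leaving the linearity expansion implicit. Your expansion and rearrangement of the eight terms checks out and matches the stated inequality.
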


\begin{proof}  Follows from Theorem \ref{basic} for $\phi_1(t) = \varphi(t)-M$, $\phi_2(t)=\varphi(t)+M$. 
\end{proof}

 \begin{theorem}
\label{SNT-thm18}
Let $A$ and $B$ be isotonic linear functionals on $L$ and let $p$, $q$ be non-negative functions from $L$.
Let $\varphi_1, \varphi_2, $ $\psi_1, \psi_2,$ $f$ and $g$ be functions such that all terms in the below inequality exist and
conditions
\[\varphi_1(t)\leq f(t) \leq \varphi_2(t) \quad {\rm and}\quad \psi_1(t)\leq g(t) \leq \psi_2(t)\]
hold. Then
\begin{eqnarray*}
&&A(p\varphi_1)B(q\psi_1)+A(pf) B(qg) \geq A(p\varphi_1)B(qg) + A(pf)B(q\psi_1),\\
 &&A(p\varphi_1)B(q\psi_2)+A(pf) B(qg)\leq A(p\varphi_1)B(qg) + A(pf)B(q\psi_2),\\
 &&A(p\varphi_2)B(q\psi_1)+A(pf) B(qg)\leq A(p\varphi_2)B(qg) + A(pf)B(q\psi_1),\\
 &&A(p\varphi_2)B(q\psi_2)+A(pf) B(qg)\geq A(p\varphi_2)B(qg) + A(pf)B(q\psi_2).
\end{eqnarray*}
\end{theorem}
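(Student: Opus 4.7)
The plan is to mimic the proof of Theorem \ref{basic}, but choose the initial pointwise inequality differently in each of the four cases, exploiting the signs of $f-\varphi_i$ and $g-\psi_j$ given by the hypotheses.

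First I would fix the following four pointwise inequalities, each of which is immediate from $\varphi_1\leq f\leq\varphi_2$ and $\psi_1\leq g\leq\psi_2$:
\begin{align*}
(f(x)-\varphi_1(x))(g(y)-\psi_1(y)) &\geq 0,\\
(f(x)-\varphi_1(x))(g(y)-\psi_2(y)) &\leq 0,\\
(f(x)-\varphi_2(x))(g(y)-\psi_1(y)) &\leq 0,\\
(f(x)-\varphi_2(x))(g(y)-\psi_2(y)) &\geq 0.
\end{align*}

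Next, for each of these I would expand the product as $f(x)g(y)-f(x)\psi_j(y)-\varphi_i(x)g(y)+\varphi_i(x)\psi_j(y)$, multiply by the non-negative weight $p(x)q(y)$, and then apply the isotonic linear functional $A$ in the variable $x$ followed by $B$ in the variable $y$. Using the remark made just before Theorem \ref{holder}, $B_yA_x$ is itself an isotonic linear functional, so the sign of the pointwise inequality is preserved. Each of the four cases then yields exactly one of the four inequalities in the statement after rearrangement (the first and fourth coming from the $\geq 0$ pointwise inequalities, the second and third from the $\leq 0$ ones).

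There is no real obstacle here: the argument is a direct four-fold repetition of the Theorem \ref{basic} proof, the only thing to check being the correct matching of signs, which is straightforward. In the writeup I would carry out the first case in detail and indicate that the other three are obtained identically from the corresponding choice of pointwise inequality above.
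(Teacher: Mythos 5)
Your proposal is correct and follows essentially the same route as the paper: the paper's proof starts from the pointwise inequality $(f(x)-\varphi_1(x))(g(y)-\psi_1(y))\geq 0$, multiplies by $p(x)q(y)$, applies $A$ in $x$ and then $B$ in $y$, and handles the other three cases from the analogous sign-definite products (written there with the factors flipped so all four read $\geq 0$, which is only a cosmetic difference from your $\leq 0$ versions). The sign matching you describe is exactly right, so no changes are needed.
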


\begin{proof} 
The inequality $\left(f(x)-\varphi_1(x)\right)\left(g(y)-\psi_1(y)\right)\geq 0$
which can be written as
\[\varphi_1(x)\psi_1(y)+f(x)g(y)\geq \varphi_1(x)g(y)+f(x)\psi_1(y)\] is obviously true.

After multiplying it with $p(x)q(y)$ and acting on this inequality first by functional
$A$ with respect to $x$ and then by $B$ with respect to $y$,    we get 
\[
B_yA_x (p(x)q(y) (\varphi_1(x)\psi_1(y)  +f(x)g(y)) 
  \geq  
B_yA_x(p(x)q(y) (\varphi_1(x)g(y)+f(x)\psi_1(y))
\]
and applying properties of isotonic linear functionals $A$ and $B$ we obtain
the first inequality.
The other three inequalities are obtained in a similar way starting from
\begin{eqnarray*}
&&\left(f(x)-\varphi_1(x)\right)\left(\psi_2(y)-g(y)\right)\geq 0,\\
&&\left(\varphi_2(x)-f(x)\right)\left(g(y)-\psi_1(y)\right)\geq 0,\\
&&\left(\varphi_2(x)-f(x)\right)\left(\psi_2(y)-g(y)\right)\geq 0
\end{eqnarray*}
respectively.
\end{proof} 

\begin{corollary}
\label{SNT-cor19}
Let $A$ and $B$ be isotonic linear functionals on $L$ and let $p$, $q$ be non-negative functions from $L$.
Let $m,M,n,N$ be real numbers and let $f$ and $g$ be functions such that
\[m\leq f(t) \leq M \quad {\rm and}\quad n\leq g(t) \leq N\]
and all terms in the below inequalities exist. Then
\begin{eqnarray*}
&& mn A(p)B(q)+A(pf) B(qg)\geq m A(p)B(qg) + n A(pf)B(q),\\
&& mN A(p)B(q)+A(pf) B(qg)\leq m A(p)B(qg) + N A(pf)B(q),\\
&& Mn A(p)B(q)+A(pf) B(qg)\leq M A(p)B(qg) + n A(pf)B(q),\\
&& MN A(p)B(q)+A(pf) B(qg)\geq M A(p)B(qg) + N A(pf)B(q).
\end{eqnarray*}
\end{corollary}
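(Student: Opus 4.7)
The plan is to derive this corollary as a direct specialization of Theorem \ref{SNT-thm18}, which is the variable-bound version already established. Specifically, I would set $\varphi_1(t) \equiv m$, $\varphi_2(t) \equiv M$, $\psi_1(t) \equiv n$, $\psi_2(t) \equiv N$ and then substitute into each of the four inequalities produced by that theorem.

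The key step is to use the linearity property A1 of the isotonic functional to pull the constants out. For instance, $A(p\varphi_1) = A(mp) = m A(p)$ and $B(q\psi_1) = B(nq) = n B(q)$, so the first inequality of Theorem \ref{SNT-thm18} becomes
\[
m A(p) \cdot n B(q) + A(pf) B(qg) \geq m A(p) B(qg) + n A(pf) B(q),
\]
which is precisely the first claim. The other three follow identically from the remaining three inequalities of Theorem \ref{SNT-thm18}, each time substituting the appropriate constant bound (either $m$ or $M$ for $\varphi_i$, and either $n$ or $N$ for $\psi_j$) and extracting the scalar factor through linearity.

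There is essentially no obstacle here; the only thing worth checking is that the hypotheses of Theorem \ref{SNT-thm18} are verified. The functions $\varphi_i$ and $\psi_j$ being constants clearly satisfy $\varphi_1 \leq f \leq \varphi_2$ and $\psi_1 \leq g \leq \psi_2$ whenever $m \leq f \leq M$ and $n \leq g \leq N$, and since $\mathbf{1} \in L$ by property L2, the products $p\varphi_i = mp$ (and analogous ones) lie in $L$ whenever $p \in L$. Thus all the required terms exist as soon as those in the corollary are assumed to exist, and the conclusion drops out immediately.
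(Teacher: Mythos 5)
Your proposal is correct and follows exactly the paper's own route: the corollary is obtained by specializing Theorem \ref{SNT-thm18} to the constant bounds $\varphi_1=m$, $\varphi_2=M$, $\psi_1=n$, $\psi_2=N$ and pulling the constants out by linearity. The additional checks you perform (existence of terms, membership in $L$) are sound but not spelled out in the paper.
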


\begin{proof}  Follows from Theorem \ref{SNT-thm18} for $\phi_1=m$, $\phi_2=M$, $\psi_1=n$, $\psi_2=N$. 
\end{proof}

\begin{theorem}
\label{SNT-thm8}
Let $A$ and $B$ be isotonic linear functionals on $L$ and let $p$, $q$ be  non-negative functions from $L$. Let $\theta_1$ and $\theta_2$ be a positive real numbers satisfying $\displaystyle \frac{1}{\theta_1}+ \frac{1}{\theta_2} =1$.
Let $f$, $\phi_1, \phi_2, $ be functions such that 
 \[\phi_1(t)\leq f(t) \leq \phi_2(t).\]
and all terms in the below inequality exist. Then
\begin{eqnarray*} \nonumber
 &&\frac{1}{\theta_1}   B(q) A(p(\phi_2-f)^{\theta_1})+ \frac{1}{\theta_2}A(p)B(q(f-\phi_1)^{\theta_2})+ A(p\phi_2)B(q\phi_1)\\ 
&&+A(pf)B(qf)  \geq  A(p\phi_2)B(qf)+A(pf)B(q\phi_1).
\label{young1}
\end{eqnarray*}
\end{theorem}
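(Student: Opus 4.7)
The plan is to mimic the argument used for Theorem \ref{basic}, but replace the trivial product estimate $(\phi_2(x)-f(x))(f(y)-\phi_1(y)) \geq 0$ with the sharper pointwise Young inequality. Since $\phi_2(x)-f(x) \geq 0$ and $f(y)-\phi_1(y) \geq 0$ everywhere, Young's inequality (in the form $ab \leq \frac{a^{\theta_1}}{\theta_1}+\frac{b^{\theta_2}}{\theta_2}$ for $a,b \geq 0$ and conjugate exponents) gives, for each $x,y \in E$,
\[
(\phi_2(x)-f(x))(f(y)-\phi_1(y)) \leq \frac{1}{\theta_1}(\phi_2(x)-f(x))^{\theta_1} + \frac{1}{\theta_2}(f(y)-\phi_1(y))^{\theta_2}.
\]

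Next I would expand the left-hand side algebraically as
\[
\phi_2(x)f(y) - \phi_2(x)\phi_1(y) - f(x)f(y) + f(x)\phi_1(y),
\]
multiply the whole pointwise inequality by the non-negative factor $p(x)q(y)$, and then apply the composite functional $B_y A_x$. By the observation preceding Theorem \ref{holder} this composite is itself isotonic and linear, so the inequality is preserved and each term separates: the cross terms on the left produce $A(p\phi_2)B(qf) - A(p\phi_2)B(q\phi_1) - A(pf)B(qf) + A(pf)B(q\phi_1)$, while on the right the variables are already separated, yielding $\frac{1}{\theta_1} A(p(\phi_2-f)^{\theta_1})B(q) + \frac{1}{\theta_2} A(p)B(q(f-\phi_1)^{\theta_2})$. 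Rearranging these terms gives exactly the claimed inequality.

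There is no real obstacle: the only points to verify are (a) that $\phi_2-f$ and $f-\phi_1$ are non-negative (guaranteed by the hypothesis $\phi_1 \leq f \leq \phi_2$), so that Young's inequality applies, and (b) that all the functions appearing after expansion lie in $L$, which is covered by the blanket assumption that all terms in the inequality exist. The structure of the proof is thus a one-line pointwise estimate followed by application of the isotonic linear functional $B_y A_x$, in direct parallel with the proof of Theorem \ref{basic}.
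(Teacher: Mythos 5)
Your proof is correct and takes essentially the same route as the paper: the pointwise Young inequality with $a=\phi_2(x)-f(x)$, $b=f(y)-\phi_1(y)$, followed by multiplication by $p(x)q(y)$ and application of the composite isotonic linear functional $B_yA_x$. The only cosmetic difference is that you expand $A(p(\phi_2-f))\,B(q(f-\phi_1))$ directly by linearity, whereas the paper phrases this last step as an appeal to the manipulation from Theorem~\ref{basic}.
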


\begin{proof} 
Let us mention the Young inequality which holds for non-negative $a,b$ and for  positive  $\theta_1$ and $\theta_2$ with property $\displaystyle \frac{1}{\theta_1}+ \frac{1}{\theta_2} =1$:
 \[  \frac{1}{\theta_1} a^{\theta_1}+ \frac{1}{\theta_2} b^{\theta_2} \geq ab.
\]
Setting in the previous inequality
\[a=\phi_2(x) -f(x), \ \ \ b=f(y)-\phi_1(y)\]
we have
\[
  \frac{1}{\theta_1} (\phi_2(x) -f(x))^{\theta_1}+ \frac{1}{\theta_2} (f(y)-\phi_1(y))^{\theta_2} \geq (\phi_2(x) -f(x))(f(y)-\phi_1(y)).\]
Applying usual procedure we get
\begin{eqnarray*} 
&& B_yA_x \left(p(x)q(y) (\frac{1}{\theta_1} \right.   \left. (\phi_2(x) -f(x))^{\theta_1}+ \frac{1}{\theta_2} (f(y)-\phi_1(y))^{\theta_2}\right)\\
&&\hspace{15mm} +B_yA_x \Big(p(x)q(y) (\phi_2(x) -f(x))(f(y)-\phi_1(y)\Big)
\end{eqnarray*}
and after applying properties of $A$ and $B$ we obtain
\[\frac{1}{\theta_1} B(q) A(p(\phi_2-f)^{\theta_1})+ \frac{1}{\theta_2}A(p)B(q(f-\phi_1)^{\theta_2}) 
 \geq A(p(\phi_2 -f))B(q(f-\phi_1)).\]
Using result from Theorem \ref{basic} we get inequality (\ref{young1}).
\end{proof}

 \begin{corollary}
\label{SNT-cor9}
Let $A$ and $B$ be isotonic linear functionals on $L$ and let $p$, $q$ be non-negative functions from $L$.
Let $m, n\in{\bf R}$ and let $f$ be function such that   $m\leq f(t)\leq M$ and all terms in the below inequality exist.
Then
\begin{eqnarray*}
(M+m)^2 A(p)B(q) &+ &A(pf^2)B(q)+2A(pf)B(qf)+A(p)B(qf^2)\\
&\geq&  2(M+m)[A(p)B(qf)+A(pf)B(q)].
\end{eqnarray*}
\end{corollary}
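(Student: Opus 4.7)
The plan is to recognize Corollary \ref{SNT-cor9} as the specialization of Theorem \ref{SNT-thm8} with constant bounds $\phi_1(t)\equiv m$, $\phi_2(t)\equiv M$ and conjugate exponents $\theta_1=\theta_2=2$. Since Theorem \ref{SNT-thm8} is already established, the proof reduces to substituting these choices and grinding through the algebraic simplification.

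First I would write out the conclusion of Theorem \ref{SNT-thm8} under these substitutions, obtaining
$$\tfrac{1}{2}B(q)A\!\left(p(M-f)^{2}\right)+\tfrac{1}{2}A(p)B\!\left(q(f-m)^{2}\right)+Mm\,A(p)B(q)+A(pf)B(qf) \geq M\,A(p)B(qf)+m\,A(pf)B(q),$$
using that $A(pM)=MA(p)$ and $B(qm)=mB(q)$ by linearity. Next I would expand the two quadratic expressions via linearity of $A$ and $B$, namely $A(p(M-f)^2)=M^{2}A(p)-2MA(pf)+A(pf^2)$ and analogously for $B(q(f-m)^2)$, so that the coefficient of $A(p)B(q)$ on the left becomes $\tfrac{M^{2}}{2}+\tfrac{m^{2}}{2}+Mm=\tfrac{(M+m)^{2}}{2}$.

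After this expansion, the left-hand side contains the two extra terms $-M\,A(pf)B(q)$ and $-m\,A(p)B(qf)$, which I would move to the right-hand side to combine with the existing $M\,A(p)B(qf)+m\,A(pf)B(q)$, yielding $(M+m)\bigl[A(p)B(qf)+A(pf)B(q)\bigr]$. Multiplying the resulting inequality through by $2$ clears the fractions and reproduces the claimed estimate.

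There is no real obstacle here: the main work is bookkeeping of constants, and the key algebraic identity is simply $\tfrac12(M^2+m^2)+Mm=\tfrac12(M+m)^2$. The only point requiring any care is ensuring that the powers $(M-f)^{\theta_1}$ and $(f-m)^{\theta_2}$ from Theorem \ref{SNT-thm8} are being evaluated with $\theta_1=\theta_2=2$, so that the hypothesis $\tfrac{1}{\theta_1}+\tfrac{1}{\theta_2}=1$ holds and the Young-inequality step underlying Theorem \ref{SNT-thm8} is legitimate.
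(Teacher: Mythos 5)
Your proposal is correct and follows exactly the route the paper takes: Corollary \ref{SNT-cor9} is obtained from Theorem \ref{SNT-thm8} by setting $\phi_1=m$, $\phi_2=M$, $\theta_1=\theta_2=2$, and your explicit expansion (in particular the identity $\tfrac12(M^2+m^2)+Mm=\tfrac12(M+m)^2$ and the transfer of $-MA(pf)B(q)-mA(p)B(qf)$ to the right-hand side) correctly fills in the algebra the paper leaves implicit.
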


\begin{proof}  Follows from Theorem \ref{SNT-thm8} for $\phi_1=m$, $\phi_2=M$ and $\theta_1=\theta_2=2$. 
\end{proof}

\begin{theorem}
\label{SNT-thm20}
Let $A$ and $B$ be isotonic linear functionals on $L$ and let $p$, $q$ be  non-negative functions from $L$.
Let $\theta_1$ and $\theta_2$ be a positive real numbers satisfying $\displaystyle \frac{1}{\theta_1}+ \frac{1}{\theta_2} =1$.
Let $\varphi_1, \varphi_2, $ $\psi_1, \psi_2,$ $f$ and $g$ be functions such that all terms in the below inequality exist and
conditions
\[\varphi_1(t)\leq f(t) \leq \varphi_2(t) \quad {\rm and}\quad \psi_1(t)\leq g(t) \leq \psi_2(t)\]
hold. Then
\begin{eqnarray*}
&&\hspace*{-38pt} \frac{1}{\theta_1} A\left(p (\varphi_2-f)^{\theta_1}\right) B(q) + \frac{1}{\theta_2} A(p) B\left(q(\psi_2-g)^{\theta_2}\right) 
\geq A \left(p(\varphi_2-f)\right) B\left(q(\psi_2-g)\right),\\ 
&&\hspace*{-30pt} \frac{1}{\theta_1} A\left(p (\varphi_2-f)^{\theta_1}\right) B(q)+ \frac{1}{\theta_2} A(p) B\left(q(g-\psi_1)^{\theta_2}\right)
\geq A \left(p(\varphi_2-f)\right) B\left(q(g-\psi_1)\right),\\ 
&&\hspace*{-30pt} \frac{1}{\theta_1} A\left(p (f-\varphi_1)^{\theta_1}\right) B(q)+ \frac{1}{\theta_2} A(p) B\left(q(\psi_2-g)^{\theta_2}\right)
\geq A \left(p(f-\varphi_1)\right) B\left(q(\psi_2-g)\right),\\ 
&&\hspace*{-30pt} \frac{1}{\theta_1} A\left(p (f-\varphi_1)^{\theta_1}\right) B(q)+ \frac{1}{\theta_2} A(p) B\left(q(g-\psi_1)^{\theta_2}\right)
\geq A \left(p(f-\varphi_1)\right) B\left(q(g-\psi_1)\right).
\end{eqnarray*}
\end{theorem}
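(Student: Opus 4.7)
The plan is to imitate the proof of Theorem \ref{SNT-thm8}, running the Young inequality argument four times with four different pairs of non-negative quantities. Specifically, for the four inequalities in turn, set
\[
(a,b) = (\varphi_2(x)-f(x),\ \psi_2(y)-g(y)),\ (\varphi_2(x)-f(x),\ g(y)-\psi_1(y)),
\]
\[
(a,b) = (f(x)-\varphi_1(x),\ \psi_2(y)-g(y)),\ (f(x)-\varphi_1(x),\ g(y)-\psi_1(y)).
\]
In each case the assumptions $\varphi_1\leq f\leq \varphi_2$ and $\psi_1\leq g\leq \psi_2$ guarantee $a,b\geq 0$ on $E$, so the Young inequality
\[
\tfrac{1}{\theta_1} a^{\theta_1} + \tfrac{1}{\theta_2} b^{\theta_2} \geq ab
\]
applies pointwise.

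Next I would multiply through by the non-negative weight $p(x)q(y)$, apply the iterated isotonic linear functional $B_y A_x$ to both sides, and then use linearity of $A$ and $B$ to split the resulting expressions. Since $p,q$ are non-negative and $A,B$ are isotonic, the inequality is preserved, and the composition $F\mapsto B_y A_x(F)$ is itself isotonic linear (as noted just after Theorem \ref{holder}). For instance, in the first case this yields
\[
\tfrac{1}{\theta_1} B(q)\, A\!\left(p(\varphi_2-f)^{\theta_1}\right) + \tfrac{1}{\theta_2} A(p)\, B\!\left(q(\psi_2-g)^{\theta_2}\right) \geq A\!\left(p(\varphi_2-f)\right) B\!\left(q(\psi_2-g)\right),
\]
which is exactly the first claimed inequality. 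The other three follow by the same mechanism with the analogous choices of $(a,b)$ above.

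There is essentially no serious obstacle here; the argument is a direct template already executed in Theorem \ref{SNT-thm8}. The only thing to be careful about is bookkeeping: ensuring the factors $p(x)$ and $q(y)$ attach to the $x$- and $y$-dependent pieces respectively before $A_x$ and $B_y$ are applied, and checking that each of the four sign conventions $\varphi_2-f,\ f-\varphi_1,\ \psi_2-g,\ g-\psi_1$ is non-negative so that Young's inequality is legitimately invoked. Once these are in place, linearity finishes the proof.
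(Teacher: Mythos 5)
Your proposal is correct and follows essentially the same route as the paper: Young's inequality applied pointwise to each of the four non-negative pairs $(a,b)$, followed by multiplication with $p(x)q(y)$ and application of $A_x$ then $B_y$. No gaps.
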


\begin{proof} 
Using the Young inequality for $a=\varphi_2(x) - f(x)$, $b=\psi_2(y)-g(y)$ we get
\[\frac{1}{\theta_1} \left( \varphi_2(x)-f(x)\right)^{\theta_1}+ \frac{1}{\theta_2} \left(\psi_2(y)-g(y)\right)^{\theta_2})
 \geq \left(\varphi_2(x)-f(x)\right) \left(\psi_2(y)-g(y)\right).
\]
Multiplying both sides with $p(x)q(y)$ and acting on the inequality by functional $A$ with respect to $x$ and then by $B$ with respect to $y$,
we get
\[
\frac{1}{\theta_1} A\left( p (\varphi_2-f)^{\theta_1}\right) B(q)+ \frac{1}{\theta_2} A(p) B\left(q(\psi_2-g)^{\theta_2}\right)\]
\[
 \geq A \left(p(\varphi_2-f)\right) B\left(q(\psi_2-g)\right).
\]
In the similar way the other three inequalities are proved.
\end{proof}

 In the following table we give a list of papers where particular cases of some Theorems from this section can be found.

\begin{table}[h!t]
\caption{Known applications for particular cases of fractional integral operators}
\label{tab:1}
\begin{tabular}{c||c|c|c}
Particular cases   & Theorem \ref{basic} & Corollary \ref{SNT-cor5}  & Theorem \ref{SNT-thm18}  \\ \hline \hline
  R-L oper. &\cite[Thm 2]{TNS},  &\cite[Cor 3]{TNS},  & \cite[Thm 5]{TNS},   \\  
   & $A\not=B$, $p= q={\bf 1}$ & $A\not=B$, $p= q={\bf 1}$ &$A\not=B$, $p= q={\bf 1}$ \\
                \hline
E-K oper. & \cite[Cor 2]{WHPG}& &\cite[Cor 3]{WHPG} \\ 
          &$A\not=B$, $p= q={\bf 1}$ & &$A\not=B$, $p= q={\bf 1}$ \\ \hline
Saigo oper. & \cite[Thm 8]{CP} & \cite[remark 9]{CP} & \cite[Thm 10]{CP}\\ 
            & $A\not=B$, $p= q={\bf 1}$ &$A\not=B$, $p= q={\bf 1}$  &$A\not=B$, $p= q={\bf 1}$ \\ 
&\cite[Thm1]{WHPG} & \cite[Cor 4]{WHPG} & \cite[Thm2]{WHPG}\\ 
& $A\not=B$, $p= q={\bf 1}$ &$A\not=B$, $p= q={\bf 1}$  & $A\not=B$, $p= q={\bf 1}$\\ \hline
Had. oper. &\cite[Thm 4]{SNT},  &\cite[Cor 5]{SNT},  & \cite[Thm 18]{SNT},  \\ 
& $A\not=B$, $p= q={\bf 1}$ &$A\not=B$, $p= q={\bf 1}$  &$A\not=B$, $p= q={\bf 1}$ \\ \hline
\end{tabular}
\end{table}

   Let us mention that the  non-weighted versions of Theorems \ref{SNT-thm8}, \ref{SNT-thm20} and Corollary \ref{SNT-cor9} for two Hadamard operators $A=_HJ^\alpha$ and $B=_HJ^\beta$ can be found in paper \cite{SNT}. 

As we can see, we did not find similar results for hypergeometric operators in literature. But, it is obvious that our results can be applied on two fractional hypergeometric operator or on $q$-analogues of those integral operators.

\section{Inequalities for three functions }

This section is devoted to the results involving three or more functions and in some sense it generalize the previous section.

\begin{theorem}
\label{RA-thm2}
Let $A$ and $B$ be isotonic linear functionals on $L$ and let $p$, $q$ be non-negative functions from $L$.
Let $f$, $g$ be similarly ordered functions and let $h$ be function with positive values. If all terms in the below inequality exist, then
\begin{eqnarray}\nonumber
&& A(pfgh)B(q)+ A(pfg)B(qh)+A(ph)B(qfg)+ A(p)B(qfgh) \\ \label{ffgghh}
&&\geq A(pfh)B(qg) + A(pf)B(qgh)+ A(pgh)B(qf) + A(pg)B(qfh).
\end{eqnarray}
If $f$ and $g$ are oppositely ordered, then  the reversed inequality holds.
\end{theorem}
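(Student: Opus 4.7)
The plan is to mimic the strategy of Theorem~\ref{thm-lip}: identify a pointwise inequality in two variables $x,y\in E$ whose expansion contains exactly the eight monomials corresponding to the eight terms of (\ref{ffgghh}), then multiply by $p(x)q(y)$ and apply $A$ in $x$ followed by $B$ in $y$. The natural candidate, combining the similarly-ordered hypothesis on $(f,g)$ with the positivity of $h$, is
\[
\bigl(h(x)+h(y)\bigr)\bigl(f(x)-f(y)\bigr)\bigl(g(x)-g(y)\bigr) \geq 0,
\]
which holds because $h(x)+h(y)>0$ and $(f(x)-f(y))(g(x)-g(y))\geq 0$ by the similar ordering of $f$ and $g$.

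Expanding the triple product gives
\begin{align*}
& f(x)g(x)h(x) + f(x)g(x)h(y) + h(x)f(y)g(y) + f(y)g(y)h(y) \\
& \qquad - f(x)h(x)g(y) - f(x)g(y)h(y) - g(x)h(x)f(y) - g(x)f(y)h(y) \,\ge\, 0.
\end{align*}
Multiplying by $p(x)q(y)\geq 0$ preserves the inequality, and applying $A$ with respect to $x$ followed by $B$ with respect to $y$ (the composite $F\mapsto B_yA_x(F)$ is an isotonic linear functional, as noted just before the theorem) sends each monomial to one of the eight terms of (\ref{ffgghh}). For instance $f(x)g(x)h(x)\cdot 1$ becomes $A(pfgh)B(q)$, $h(x)\cdot f(y)g(y)$ becomes $A(ph)B(qfg)$, $f(x)h(x)\cdot g(y)$ becomes $A(pfh)B(qg)$, and $g(x)\cdot f(y)h(y)$ becomes $A(pg)B(qfh)$; the remaining four correspondences are analogous. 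Linearity and isotonicity of $A$ and $B$ assemble these into precisely the difference LHS $-$ RHS of (\ref{ffgghh}).

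The main obstacle is purely clerical: matching the eight monomials with the eight functional terms without an index slip. The oppositely-ordered case follows from the same argument, since then $(f(x)-f(y))(g(x)-g(y))\leq 0$ while the factor $h(x)+h(y)$ stays positive, so the pointwise inequality — and hence (\ref{ffgghh}) — reverses.
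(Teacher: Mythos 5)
Your proposal is correct and is essentially identical to the paper's own proof: the paper also starts from $(f(x)-f(y))(g(x)-g(y))(h(x)+h(y))\geq 0$, expands into the same eight monomials, multiplies by $p(x)q(y)$, and applies $A$ in $x$ then $B$ in $y$. The monomial-to-term correspondences you list are the right ones, and the reversal for oppositely ordered $f,g$ is handled the same way.
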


\begin{proof} 
For similarly ordered functions $f$, $g$ we have $(f(x)-f(y))(g(x)-g(y))\geq 0$, but then also
\[(f(x)-f(y))(g(x)-g(y))(h(x)+h(y))\geq 0.\]
This can be written as
\begin{eqnarray*}
&&f(x)g(x)h(x)+f(x)g(x)h(y)+f(y)g(y)h(x)+f(y)g(y)h(y)\\
&\geq& f(x)g(y)h(x) + f(x)g(y)h(y)+ f(y)g(x)h(x)+f(y)g(x)h(y).
\end{eqnarray*}
Multiplying both sides by $p(x)q(y)$ and acting on this inequality first by functional
$A$ with respect to $x$ and then by $B$ with respect to $y$  we get
the desired inequality.
\end{proof}

\begin{remark}
For $p=q$ from previous theorem we get
\begin{eqnarray*}
&& A(pfgh)B(p)+ A(pfg)B(ph)+A(ph)B(pfg)+ A(p)B(pfgh)\\
&\geq& A(pfh)B(pg) + A(pf)B(pgh)+ A(pgh)B(pf) + A(pg)B(pfh).
\end{eqnarray*}
 
If also $A=B$ then
\[
A(pfgh)A(p)+ A(pfg)A(ph) \geq A(pfh)A(pg) + A(pf)A(pgh)
\]
and for $h=const$ it reduces to the Chebyshev inequality.
\end{remark}

\begin{remark}
Particular cases of inequality (\ref{ffgghh})   are appeared  in several papers for different kinds of linear functionals. For example, if $A$ and $B$ are Riemann-Liouville's operators, then a  non-weighted inequality  is given in Theorem 2.1 in paper \cite{sulaiman}. If $A$ and $B$ are different fractional $q$-integral of the Riemann-Liouville-type, then (\ref{ffgghh}) is given in \cite[Thm 2.1]{sroy} for $p=q$. 
Inequalities involving two $q$-analogues of Saigo's fractional integral operators which are particular cases of (\ref{ffgghh}) are given in \cite{BA} as Theorems 5 and 6, while similar results for generalized $q$-Erd\' elyi-Kober fractional integral operators are given in \cite[Thm 1 and 2]{RA}.
\end{remark}
 
\begin{lemma}\label{lemma5.1}
Let $A$ and $B$ be isotonic linear functionals on
$L$ and let $p$, $q$ be non-negative functions from $L$. Let
\begin{eqnarray*}
\hspace*{-8pt}H_{f,g,h}(x,y)&= &\left(f(x)-f(y)\right)\left(g(x)-g(y)\right)\left(h(x)-h(y)\right)\\
&= &f(x)g(x)h(x)+f(x)g(y)h(y)+f(y)g(x)h(y)+f(y)g(y)h(x)\\
& -& f(y)g(x)h(x)-f(x)g(y)h(x)- f(x)g(x)h(y)-f(y)g(y)h(y).
\end{eqnarray*}
Then
\begin{eqnarray*}
&& \hspace*{-8pt} A_x B_y  (p(x)q(y) H_{f,g,h}(x,y)) \\
&= &A(pfgh)B(q)+A(pf)B(qgh)+A(pg)B(qfh)+A(ph)B(qfg)\\
& -& A(pgh)B(qf)-A(pfh)B(qg)- A(pfg)B(qh)-A(p)B(qfgh).
\end{eqnarray*}
\end{lemma}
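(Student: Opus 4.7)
The plan is to obtain the identity by direct expansion: the statement already writes $H_{f,g,h}(x,y)$ as a signed sum of eight monomials of the form $f(\xi)g(\eta)h(\zeta)$ with $\xi,\eta,\zeta\in\{x,y\}$, so no further algebraic manipulation of the triple product is needed. What remains is to multiply each monomial by $p(x)q(y)$ and then apply the compound functional $A_xB_y$ term by term.

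For each monomial the six factors split cleanly into an $x$-dependent part (absorbing $p(x)$) and a $y$-dependent part (absorbing $q(y)$). Using the linearity axiom A1 for $A$, and noting that any $y$-dependent factor behaves as a scalar with respect to $A_x$, applying $A_x$ first extracts a factor $A(p\cdot\varphi)$ where $\varphi$ is the product of the $x$-dependent letters drawn from $\{f,g,h\}$; then applying $B_y$ extracts $B(q\cdot\psi)$ where $\psi$ is the product of the complementary, $y$-dependent letters. Summing with the signs already recorded in the expansion produces the right-hand side of the claim. (Note that since $F\mapsto B_yA_x(F)$ is itself an isotonic linear functional, as remarked in the preceding paragraph of the paper, we are free to do the $A_x$ step first and the $B_y$ step afterwards without scruple.)

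The only real obstacle is the clerical one of matching signs with partitions of $\{f,g,h\}$ between $x$ and $y$. Classifying by the number of $y$-subscripts in each monomial, there is one monomial with all three letters at $x$, yielding $+A(pfgh)B(q)$; one with all three at $y$, yielding $-A(p)B(qfgh)$; three with two letters at $x$ and one at $y$, yielding $-A(pfg)B(qh)-A(pfh)B(qg)-A(pgh)B(qf)$; and three with one letter at $x$ and two at $y$, yielding $+A(pf)B(qgh)+A(pg)B(qfh)+A(ph)B(qfg)$. Assembling these eight signed contributions is exactly the asserted expression for $A_xB_y(p(x)q(y)H_{f,g,h}(x,y))$, which completes the verification.
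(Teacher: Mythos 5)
Your proof is correct: the sign bookkeeping (sign $(-1)^{\#\{y\text{-slots}\}}$ for each monomial, classified by how the letters $f,g,h$ split between $x$ and $y$) matches the expansion in the statement, and the term-by-term application of $A_x$ and $B_y$ via linearity is exactly the intended verification. The paper states this lemma without proof, treating it as a routine computation, and your argument is precisely that computation carried out.
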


\begin{theorem}
\label{RA-thm4}
Let $A$ and $B$ be isotonic linear functionals on $L$ and let $p$, $q$ be non-negative functions from $L$.
Let $m,M$, $n, N$, $k, K$ be real numbers and let $f$, $g$, $h$ be functions such that 
$$m\leq f(x)\leq M, \quad n\leq g(x)\leq N, \quad k\leq h(x)\leq K,\quad \forall x.$$
If all the terms in the below inequality exist, then
\begin{eqnarray*}
&&\big| A(pfgh)B(q)+A(pf)B(qgh)+A(pg)B(qfh)+A(ph)B(qfg) \\
&&-A(pfg)B(qh)-A(pfh)B(qg)-A(pgh)B(qf)-A(p)B(fgh)\big|\\
&& \leq (M-m)(N-n)(K-k) A(p)B(q).
\end{eqnarray*}
\end{theorem}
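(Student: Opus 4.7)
The plan is to reduce the claim to a pointwise bound on the quantity
$$H_{f,g,h}(x,y) = (f(x)-f(y))(g(x)-g(y))(h(x)-h(y))$$
via Lemma~\ref{lemma5.1}, followed by an application of the isotonicity of the two functionals.

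First, by Lemma~\ref{lemma5.1}, the long combination inside the absolute value on the left-hand side is exactly $A_xB_y(p(x)q(y)H_{f,g,h}(x,y))$. Thus, it suffices to estimate this combined functional in modulus.

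Next, the boundedness hypotheses $m\leq f(x)\leq M$, $n\leq g(x)\leq N$ and $k\leq h(x)\leq K$ give $|f(x)-f(y)|\leq M-m$, $|g(x)-g(y)|\leq N-n$ and $|h(x)-h(y)|\leq K-k$ for every $x,y\in E$. Multiplying these three estimates yields the pointwise two-sided bound
$$-(M-m)(N-n)(K-k) \leq H_{f,g,h}(x,y) \leq (M-m)(N-n)(K-k).$$

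Finally, I multiply this pointwise inequality by the non-negative function $p(x)q(y)$ and apply the isotonic linear functional $A$ in the variable $x$ followed by $B$ in the variable $y$; as the paper already remarks, the composition $F\mapsto A_xB_y(F)$ is itself an isotonic linear functional. Since the constant function $(M-m)(N-n)(K-k)$ is mapped to $(M-m)(N-n)(K-k)\,A(p)B(q)$, the two-sided estimate becomes
$$\bigl|A_xB_y(p(x)q(y)H_{f,g,h}(x,y))\bigr| \leq (M-m)(N-n)(K-k)\,A(p)B(q),$$
which together with the identification furnished by Lemma~\ref{lemma5.1} is the claimed inequality. I anticipate no substantive obstacle here: the only non-routine ingredient is invoking Lemma~\ref{lemma5.1} to re-express the linear combination compactly, after which the argument is a direct combination of elementary absolute-value bounds and isotonicity.
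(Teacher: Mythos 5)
Your proposal is correct and follows essentially the same route as the paper: both reduce the left-hand side to $A_xB_y(p(x)q(y)H_{f,g,h}(x,y))$ via Lemma~\ref{lemma5.1}, bound $|H_{f,g,h}(x,y)|$ pointwise by $(M-m)(N-n)(K-k)$ using the three oscillation estimates, and conclude by isotonicity. Your write-up is in fact slightly more explicit than the paper's in spelling out the two-sided inequality before applying the functionals, but there is no substantive difference.
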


\begin{proof} 
From $m\leq f(x)\leq M$ it follows $\left| f(x)-f(y)\right|\leq M-m$.
Therefore
\[
\left| \left(f(x)-f(y)\right)\left(g(x)-g(y)\right)\left(h(x)-h(y)\right) \right| \leq (M-m)(N-n)(K-k), \  i.e.
\]
\[
|H_{f,g,h}(x,y) | \leq (M-m)(N-n)(K-k). 
\]
 Multiplying both sides by $p(x)q(y)$ and using Lemma \ref{lemma5.1} we get 
\begin{eqnarray*}
&&\Big|
A(pfgh) B(q)-A(pfg)B(qh)-A(pfh)B(qg)+A(pf)B(qgh) \\
&& -A(pgh)B(qf)+A(pg)B(qfh)+A(ph)B(qfg)-A(p)B(fgh)
\Big| \\
&&\leq  (M-m)(N-n)(K-k) A(p)B(q),
\end{eqnarray*}
which proves the theorem.
\end{proof}

\begin{remark}
Particular cases of Theorem \ref{RA-thm4}  are appeared  in \cite[Thm 8 and 9]{BA} for two $q$-analogues of Saigo's fractional integral operators and in \cite[Thm 3 and 4]{RA} for generalized $q$-Erd\' elyi-Kober fractional integral operators.
\end{remark}
  
 \begin{theorem}
\label{RA-thm6}
Let $A$ and $B$ be isotonic linear functionals on $L$ and let $p$, $q$ be non-negative functions from $L$.
Let $M_1,M_2,M_3$ be real numbers and let $f_i$, $(i=1,2,3)$ be   $M_i-g-$Lipschitz functions.
  If all the terms in the below inequality exist, then
\begin{eqnarray*}
&&\big| A(pf_1f_2f_3)  B(q)+A(pf_1)B(qf_2f_3)+A(pf_2)B(qf_1f_3)+A(pf_3)B(qf_1f_2) \\
&&-A(pf_1f_2)B(qf_3)-A(pf_1f_3)B(qf_2)-A(pf_2f_3)B(qf_1)-A(p)B(qf_1f_2f_3)\big|\\
&& \leq M_1M_2M_3 \cdot B_yA_x(p(x)q(y)|g(x)-g(y)|^3).
\end{eqnarray*}
\end{theorem}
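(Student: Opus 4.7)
The plan is to reduce this bound to Lemma \ref{lemma5.1} together with the $M_i$--$g$--Lipschitz hypotheses, in the same spirit as the proof of Theorem \ref{RA-thm4}. First I would apply Lemma \ref{lemma5.1} with $f=f_1$, $g=f_2$, $h=f_3$: this shows that the expression inside the absolute value on the left-hand side is precisely
\[
B_y A_x\bigl(p(x)q(y)\,H_{f_1,f_2,f_3}(x,y)\bigr),
\]
where $H_{f_1,f_2,f_3}(x,y)=\prod_{i=1}^{3}(f_i(x)-f_i(y))$.

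Next I would estimate the integrand pointwise. Since $|f_i(x)-f_i(y)|\leq M_i|g(x)-g(y)|$ for each $i$, multiplying the three inequalities yields
\[
|H_{f_1,f_2,f_3}(x,y)|\leq M_1M_2M_3\,|g(x)-g(y)|^3,
\]
and, since $p(x)q(y)\geq 0$,
\[
-M_1M_2M_3\,p(x)q(y)|g(x)-g(y)|^3\leq p(x)q(y)H_{f_1,f_2,f_3}(x,y)\leq M_1M_2M_3\,p(x)q(y)|g(x)-g(y)|^3.
\]

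Finally, as observed earlier in the paper, the composite map $F\mapsto B_yA_x(F)$ is itself an isotonic linear functional on the space of functions of two variables. Applying it to the two-sided inequality above and using linearity gives
\[
\bigl|B_y A_x(p(x)q(y)H_{f_1,f_2,f_3}(x,y))\bigr|\leq M_1M_2M_3\, B_y A_x\bigl(p(x)q(y)|g(x)-g(y)|^3\bigr),
\]
which, combined with the first step, is exactly the claimed estimate. There is no real obstacle here: the only point requiring minor care is the passage from the pointwise bound to the functional bound via isotonicity, which is the same trick that converts $-|F|\leq F\leq |F|$ into $|A(F)|\leq A(|F|)$ for an isotonic linear functional $A$.
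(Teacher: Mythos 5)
Your proposal is correct and follows essentially the same route as the paper: multiply the three Lipschitz bounds to control $|H_{f_1,f_2,f_3}|$, identify the left-hand side via Lemma \ref{lemma5.1}, and pass from the pointwise two-sided bound to the functional bound using the isotonicity of $F\mapsto B_yA_x(F)$. Your write-up is in fact slightly more explicit than the paper's about the role of Lemma \ref{lemma5.1} and the final isotonicity step, but the argument is the same.
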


\begin{proof}  If $f_i$, $(i=1,2,3)$ are   $M_i-g-$Lipschitz functions, then 
\[
|f_1(x)-f_1(y)|\leq M_1|g(x)-g(y)|,\quad 
|f_2(x)-f_2(y)|\leq M_2|g(x)-g(y)|,\]
\[|f_3(x)-f_3(y)|\leq M_3|g(x)-g(y)| \]
for all $x,y$.
Multiplying those inequalities we get 
\[|H_{f_1,f_2,f_3}(x,y)|\leq M_1M_2M_3|g(x)-g(y)|^3.\]
This is equivalent to
\[H_{f_1,f_2,f_3}(x,y)\leq M_1M_2M_3|g(x)-g(y)|^3 \quad {\rm and} \]
\[ -H_{f_1,f_2,f_3}(x,y)\leq M_1M_2M_3|g(x)-g(y)|^3.\]
Multiplying both inequalities with $p(x)q(y)$ and acting on the resulting inequalities by $A$ with respect to $x$ and then by $B$ with respect to $y$,
we get the desired result.
\end{proof}

\begin{remark}
In \cite[Thm 11 and 12]{BA}  and \cite[Thm 5 and 6]{RA} authors attempted to give corresponding results for $q$-analogues of Saigo's fractional integral operators and  for generalized $q$-Erd\' elyi-Kober fractional integral operators, respectively. But they used assumptions $|f_i(x)-f_i(y)|\leq M_i(x-y)$, $i=1,2,3$, $x,y>0$ which leads to conclusion that $f_i \equiv 0$.
\end{remark}

\begin{remark}
Considering results from this and from the previous section it is clear how Theorems \ref{thm-lip} and \ref{RA-thm6} can be generalized for $n$ $M_i-g-$Lipschitz functions $f_i$, $i=2, \ldots ,n$. We leave it to a reader.
\end{remark}

\subsection*{Results with two functions and three weights} The following result is based on the succesive using of the Chebyshev inequality for pairs of weights. 

\begin{theorem} \label{fgpqr}
Let $A$ and $B$ be isotonic linear functionals on $L$ and let $p,q,r$ be non-negative functions from $L$. If $f$ and $g$ are similarly ordered functions, then 
\begin{eqnarray*}
A(p)[2A(q)B(rfg)&+&
A(r)B(qfg)+B(r)A(qfg)] \\
 +A(pfg)[A(q)B(r)+A(r)B(q)]&\geq&
A(p)[A(qf)B(rg)+A(qg)B(rf)]\\
+
A(q)[A(pf)B(rg)+A(pg)B(rf)]&+&
A(r)[A(pf)B(qg)+A(pg)B(qf)],
\end{eqnarray*}
under assumptions that all terms are well-defined.

If $f$ and $g$ are oppositely ordered functions, then the reversed inequality holds.
\end{theorem}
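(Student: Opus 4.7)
The plan is to apply the Chebyshev inequality for two functionals (Theorem \ref{thm:ceb}) three times, once for each pair of weights chosen from $\{p,q,r\}$, and then take a non-negative linear combination of the resulting inequalities. Concretely, with $A$, $B$ and the similarly ordered pair $f,g$ fixed, Theorem \ref{thm:ceb} yields the three inequalities
\begin{eqnarray*}
A(pfg)B(q)+A(p)B(qfg) &\geq& A(pf)B(qg)+A(pg)B(qf), \\
A(pfg)B(r)+A(p)B(rfg) &\geq& A(pf)B(rg)+A(pg)B(rf), \\
A(qfg)B(r)+A(q)B(rfg) &\geq& A(qf)B(rg)+A(qg)B(rf).
\end{eqnarray*}

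Next, I would multiply the first inequality by $A(r)$, the second by $A(q)$, and the third by $A(p)$. Since $p,q,r$ are non-negative and $A$ is isotonic, all three multipliers $A(p), A(q), A(r)$ are non-negative, so the direction of the inequalities is preserved. Summing the three resulting inequalities then produces on the right the expression
\[
A(r)\big[A(pf)B(qg)+A(pg)B(qf)\big]+A(q)\big[A(pf)B(rg)+A(pg)B(rf)\big]+A(p)\big[A(qf)B(rg)+A(qg)B(rf)\big],
\]
which matches the claimed right-hand side verbatim. On the left, the $B(rfg)$ terms contribute $A(p)A(r)+A(p)A(q)$ combined with an $A(q)A(p)$, i.e.\ the coefficient $2A(p)A(q)$ of $B(rfg)$; similarly the $B(qfg)$ term collects coefficient $A(p)A(r)$ and the $B(r)A(qfg)$ term arises from the third inequality. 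The remaining $A(pfg)$ terms appear with coefficients $A(r)B(q)+A(q)B(r)$, which is exactly the factor $[A(q)B(r)+A(r)B(q)]$ in the statement.

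So the argument is just Chebyshev $+$ bookkeeping, and the only delicate point is the verification that the weighted sum of the three Chebyshev-type inequalities indeed reproduces the stated seven-term left-hand side and six-term right-hand side; this is a direct term-by-term match. For the oppositely ordered case, Theorem \ref{thm:ceb} reverses each of the three basic inequalities; multiplying again by the non-negative factors $A(p), A(q), A(r)$ and summing yields the reversed form of the stated inequality.
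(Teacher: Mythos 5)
Your proposal is correct and is essentially the paper's own proof: apply the two-functional Chebyshev inequality (Theorem \ref{thm:ceb}) to the weight pairs $(p,q)$, $(p,r)$ and $(q,r)$, multiply the three inequalities by the non-negative numbers $A(r)$, $A(q)$ and $A(p)$ respectively, and add. The only blemish is a garbled phrase in your bookkeeping for the coefficient of $B(rfg)$ (the stray ``$A(p)A(r)$'' does not belong there), but the stated coefficient $2A(p)A(q)$ and the final term-by-term match are correct.
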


\begin{proof} 
Replacing in (\ref{ceb1}) $p$ by $q$ and $q$ by $r$ and multiplying by $A(p)$ we get
\[
A(p)[A(q)B(rfg)+B(r)A(qfg)] \geq A(p)[A(qf)B(rg)+A(qg)B(rf)].
\]
Replacing in (\ref{ceb1})  $q$ by $r$ and multiplying by $A(q)$ we get
\[
A(q)[A(p)B(rfg)+B(r)A(pfg)] \geq A(q)[A(pf)B(rg)+A(pg)B(rf)].
\]
Multiplying (\ref{ceb1}) by $A(r)$ we get
\[
A(r)[A(p)B(qfg)+B(q)A(pfg)] \geq A(r)[A(pf)B(qg)+A(pg)B(qf)].
\]
Adding the above inequalities we get the statement of the theorem.
 \end{proof}

\begin{remark}
Theorem \ref{fgpqr} are proved in several papers for different kinds of linear operators. For example, if $A$ and $B$ are the Riemann-Liouville operators, then it is given in \cite{DAH5}. Result involving Hadamard operators is given in \cite{CP1}, while an analogue result for the Saigo operators and $q$-analogue of Saigo's operators are given in \cite{choi} and \cite{Yang}.
\end{remark}


\section*{Acknowledgements}
  The research of the second author was partially supported by the Sofia University SRF under contract No 146/2015.



\end{document}